\numberwithin{equation}{section}
\DeclareMathAlphabet{\pazocal}{OMS}{zplm}{m}{n}
\newcommand{\ks}{k_*}
\def\eps{\varepsilon }
\newcommand\R{\mathbb R}
\def\eps{\varepsilon}
\newcommand\br{\begin{remark}}
\newcommand\er{\end{remark}}
\newcommand\bp{\begin{pmatrix}}
\newcommand\ep{\end{pmatrix}}
\newcommand{\be}{\begin{equation}}
\newcommand{\ee}{\end{equation}}
\newcommand\ba{\begin{equation}\begin{aligned}}
\newcommand\ea{\end{aligned}\end{equation}}
\newcommand{\bap}{\begin{app}}
\newcommand{\eap}{\end{app}}
\newcommand{\begs}{\begin{exams}}
\newcommand{\eegs}{\end{exams}}
\newcommand{\beg}{\begin{example}}
\newcommand{\eeg}{\end{exaplem}}
\newcommand{\bpr}{\begin{proposition}}
\newcommand{\epr}{\end{proposition}}
\newcommand{\bt}{\begin{theorem}}
\newcommand{\et}{\end{theorem}}
\newcommand{\bc}{\begin{corollary}}
\newcommand{\ec}{\end{corollary}}
\newcommand{\bl}{\begin{lemma}}
\newcommand{\el}{\end{lemma}}
\newcommand{\bd}{\begin{definition}}
\newcommand{\ed}{\end{definition}}
\newcommand{\brs}{\begin{remarks}}
\newcommand{\ers}{\end{remarks}}
\newcommand{\CC}{{\mathbb C}}
\newcommand{\Id}{{\rm Id }}
\newtheorem{theorem}{Theorem}[section]
\newtheorem{proposition}[theorem]{Proposition}
\newtheorem{corollary}[theorem]{Corollary}
\newtheorem{lemma}[theorem]{Lemma}
\theoremstyle{remark}
\newtheorem{remark}[theorem]{Remark}
\theoremstyle{definition}
\newtheorem{definition}[theorem]{Definition}
\newtheorem{example}[theorem]{Example}
\newcommand\cA{{\mathcal A}}
\newcommand\cJ{{\mathcal J}}
\newcommand\cR{{\mathcal R}}
\newcommand\cL{{\mathcal L}}
\newcommand\cN{{\mathcal N}}
\newcommand{\RM}{\mathbb{R}}
\newcommand{\beq}{\begin{equation}}
\newcommand{\eeq}{\end{equation}}
\title{Forward-modulated damping estimates and nonlocalized stability of periodic Lugiato-Lefever waves}
\author{Kevin Zumbrun}
\address{Indiana University, Bloomington, IN 47405}
\email{kzumbrun@indiana.edu}
\thanks{Research of K.Z. was partially supported under NSF grant no. DMS-0300487}
\begin{document}

\begin{abstract}
In an interesting recent analysis, Haragus-Johnson-Perkins-de Rijk have shown
modulational stability under localized perturbations of steady periodic solutions of the 
Lugiato-Lefever equation (LLE), in the process pointing out a difficulty in obtaining standard ``
nonlinear damping estimates'' on modulated perturbation variables to control regularity of solutions.
Here, we point out that in place of standard ``inverse-modulated'' damping estimates, one can alternatively 
carry out a damping estimate on the ``forward-modulated'' perturbation, noting that norms of forward- and 
inverse-modulated variables are equivalent modulo absorbable errors, thus recovering the classical argument 
structure of Johnson-Noble-Rodrigues-Zumbrun for parabolic systems.  
This observation seems of general use in situations of delicate regularity.
	Applied in the context of (LLE) it gives the stronger result of stability and asymptotic
	behavior with respect to nonlocalized perturbations.
\end{abstract}

\maketitle

\section{Introduction}\label{s:intro}

In the interesting recent work \cite{HJPR}, building on linear analysis in \cite{HJP},
Haragus, Johnson, Perkins, and de Rijk study nonlinear modulational stability of steady periodic solutions of
the Lugiato-Lefever equations (LLE), a model for pattern formation in an optical medium in a cavity 
under excitement by laser pumping.
A general framework for the passage from linear to nonlinear modulational stability has been
set up in \cite{JZN,JNRZ3} and related works, and the authors loosely follow this path.
However, they find it necessary to modify the approach substantially in the treatment of regularity,
substituting for the usual nonlinear modulational damping a combination of ``tame'' unmodulated estimates
with exponentially decaying linear terms, and using in an important way semilinearity of the
underlying equations (LLE).
This strategy, introduced in \cite{RS} in the context of multi-D stability of planar periodic wave trains,
is shown in \cite{HJPR} to be sufficient also to resolve the regularity issues arising in treatment of 1-D stability
of Lugiato-Lefever waves,
thus adding a new and useful element in the toolkit for modulational stability, applicable to the semilinear case.

The authors in passing pose the question whether a modulated damping estimate is obtainable at all for
(LLE).
And, indeed, this is not just of academic interest.
For, the unmodulated estimates obtained in \cite{HJPR}, though sufficient to close their stability
argument, are substantially weaker than those that would be given by a standard modulated damping estimate.
Specifically, as we discuss in detail below, the argument of \cite{HJPR} gives control of the $H^{s_0}$ norm
of the modulated perturbation $v$ 
by its $L^2$ norm times an arbitrarily slowly growing algebraic factor,
 modulo the $L^2$ norm of the modulating phase perturbation $\gamma$.
By contrast, a standard damping estimate controls the $H^{s_0}$ norm of $v$ by its (exact) $L^2$ norm,
modulo the $L^2$ norm of {\it the derivative of $\gamma$.}

A central point in the study of modulational stability is that $v$ and derivatives of the phase perturbation 
$\gamma$ exhibit comparable decay, whereas $\gamma$ itself decays more slowly, by a factor of $(1+t)^{1/2}$.
Indeed, this is the motivation for separating out phase perturbation in the first place, 
with the goal being to obtain a nonlinear iteration involving only faster-decaying
$v$ and $\partial_{x,t}\gamma$, hence more likely to close \cite{JZ}.
Thus, {\it the $L^2 \to H^{s_0}$ control afforded by damping modulo $\|\partial_x \gamma\|_{L^2}$
is in principle sharper by factor $(1+t)^{-1/2}$ than that afforded modulo $\|\gamma\|_{L^2}$
by the strategy of \cite{HJPR}.}

Heuristically, the phase perturbation is expected to satisfy an approximate Burgers equation \cite{HK,DSSS}
hence to decay approximately as solutions of the heat equation,
a property that has been validated rigorously in various settings in \cite{JNRZ1,JNRZ2,SSSU},
In the case considered in \cite{HJPR} 
of {\it localized initial perturbations} $\tilde v$ of a background periodic wave $\bar u$, one has, writing
$u=\bar u+ \tilde v$ and defining the modulated perturbation variable 
$$
v(x,t)= u(x+\gamma(x,t),t)-\bar u(x)
$$
that $v\sim \tilde v + \bar u_x \gamma$, with $\bar u_x \sim 1$, hence $L^1$ localization
$\|v\|_{L^1}, \|\tilde v\|_{L^1}<\infty$ on $v, \tilde v$ imposes $L^1$ localization 
$\|\gamma\|_{L^1}<+\infty$ on $\gamma$ as well, leading to decay rate 
$\|\gamma(\cdot, t)\|_{L^2} \lesssim (1+t)^{-1/4}$.
Thus, one expects $\|\tilde v\|_{L^2} \sim \|\gamma\|_{L^2} \lesssim (1+t)^{-1/4}$ and
$\|v\|_{L^2} \sim \|\partial_{x,t}\gamma\|_{L^2}  \lesssim (1+t)^{-3/4}$.
And, indeed, this is the result proved in \cite{HJPR} for initial perturbations $\tilde v_0$
sufficiently small in $L^1\cap H^4$ of a smooth spatially periodic standing-wave solution of (LLE)
satisfying the standard {\it diffusive spectral stability} condition of Schneider \cite{S1,S2,JZ,JNRZ1,JNRZ2}.
Though not stated in the theorem, the main derivative bounds obtained in the proof are
$\|v\|_{H^2}\lesssim 1$ and $\|v\|_{H^4}\lesssim (1+t)^{1/4}$,
with $H^{s_0}$ corresponding to $H^2$ being the crucial bound needed to close the nonlinear iteration.
Thus, the respective error terms $\|\gamma\|_{L^2}$ and
$\|\partial_x \gamma\|_{L^2}$, being $\ll 1$ are both irrelevant, and so the
difference in the context of this argument between tame and damping type estimates is mainly in
simplification/standardization of the argument and not in the finally obtained result.\footnote{
	The simplification afforded by damping, however, is rather great, eliminating the need
	for integration by parts and mean value inequalities and most of the technical complications of the proof
	(cf. \cite{JZ,HJPR}).}

For {\it nonlocalized initial perturbations} on the other hand, as considered in \cite{JNRZ1,JNRZ2,SSSU,JNRZ3},
the phase perturbation $\gamma$ is taken merely bounded in $L^\infty$, with 
$L^1$ localization $\|\partial_x \gamma\|_{L^2}<\infty$ imposed, rather, on its derivative.
This yields decay rates $\|v\|_{L^2}, \|\partial_x \gamma\|_{L^2} \lesssim (1+t)^{-1/4}$,
but with $\|\gamma\|_{L^2}=+\infty$.
Thus, in this context, $\|\gamma\|_{L^2}$ is clearly {\it not} a negligible error, and so the argument
structure of \cite{HJPR} based on tame estimates of the unmodulated perturbation $\tilde v$ does not
suffice to close a nonlinear iteration.
The sharper bound $\|v\|_{H^{s_0}}\sim \|\partial_x \gamma\|_{L^2} \lesssim  (1+t)^{-1/4}$ afforded by
damping estimates, however, does suffice, yielding both stability and asymptotic behavior.

\medskip

{\it This gives substantial motivation, of practical interest, to answering the question of \cite{HJPR}
whether or not it is possible to obtain a modulated damping estimate  for (LLE).}

\medskip

We are not able to answer this question in the sense of the original damping estimates formulated in
\cite{JNRZ3} and elsewhere.
However, in the present brief note, motivated by the discussion of \cite{HJPR},
we point out that (i) a modulated damping estimate {\it can} be obtained for (LLE) if one substitutes for
the usual ``inverse-modulated'' perturbation variable \cite{JNRZ3} a ``forward-modulated'' version
(see below for definitions), and (ii) inverse- and forward-modulated perturbation variables are equivalent
in all relevant norms, modulo an absorbable high-derivative $H^r$ norm of $\gamma_x$.

This gives an alternative approach to the regularity problem for (LLE) in the original spirit of \cite{JNRZ3} 
(see final section), which (i) removes much of the technical complexity of the argument, and
(ii) is not inherently limited to the semilinear case.
More important, our approach yields significantly sharper bounds, allowing the treatment as in \cite{JNRZ3}
of {\it nonlocalized perturbations} allowing different asymptotic limits of the phase shift $\gamma$, 
whereas the argument of \cite{HJPR} requires $\|\gamma\|_{L^2}<\infty$.
Thus, we obtain at once a substantial simplification of the argument and a substantial generalization of the result.

We emphasize that our approach is not tied to (LLE) but applies for arbitrary systems.
Indeed, we would propose as a useful option, substituting for the original framework of \cite{JNRZ3}
the modified one of obtaining linear bounds in inverse-modulated variables, but damping-type energy estimates
in forward-modulated ones where they may be easier to obtain.
We believe this observation to be of general use in situations of delicate regularity,
belonging in the multi-purpose toolkit of \cite{JNRZ3}.

\medskip
{\bf Acknowledgement:} We thank the referees for their careful reading and helpful suggestions.

\section{Comparisons of techniques}\label{s:comparison}

We begin by comparing the various techniques in a general context.

\subsection{Damping vs. tame estimates}\label{s:comp}
A standard issue in the approach of \cite{JNRZ3} is closing a nonlinear
iteration despite apparent loss of derivatives in the nonlinear (modulational) perturbation equations
(displayed below for (LLE)).  This has previously been addressed by the use of 
{\it nonlinear damping estimates} \cite[Section 1.3]{JNRZ3}
\be\label{damp}
\partial_t \mathcal{E}(v)\leq -\eta \mathcal{E}(v) + C(\|v\|_{L^2_\alpha}^2 + \|\partial_{x,t}\gamma\|_{H^r_\alpha}),
\ee
$s<r$, where $\mathcal{E}(v)$ is an energy controlling a (possibly weighted, with weight denoted by $\alpha$) 
Sobolev norm $\|v\|_{H^s_\alpha}^2$ for the modulated perturbation 
$$
v(x,t)=u(x+\gamma(x,t), t)-\bar u(x),
$$
where $u$ and $\bar u$ are perturbed and background solutions, and $\gamma$ (see below) is a modulation
parameter introduced in the analysis with arbitrarily high derivative control of the same order as
$\|v\|_{L^2_\alpha}$.
This yields by a Gronwall-type estimate control of $\|v\|_{H^s_\alpha}^2$ by 
$e^{-\eta t} \|v\|_{H^s_\alpha}^2(0)$ plus the integral of an exponentially decaying memory kernel against
$C(\|v\|_{L^2_\alpha}^2 + \|\partial_{x,t}\gamma\|_{H^r_\alpha})$,
thus effectively controlling $H^s$ by $L^2$ decay.

In \cite{HJPR}, the authors use an alternative approach introduced in \cite{RS},
playing modulated and unmodulated perturbations against each other to obtain a result.
The ingredients needed are exponential decay of high-frequency linear estimates for the unmodulated semigroup, plus
the aforementioned semilinear structure, allowing the unmodulated perturbation to be estimated 
via Duhamel's principle thanks to the fact that there is no loss of derivatives in the unmodulated equation..
Specifically, one attains on the unmodulated variable 
$$
\tilde v(x,t)=u(x,t)-\bar u(x)
$$
the tame estimate
$ \|\tilde v\|_{H^s}(t)\leq C(1+t)^{1/4}, $
for arbitrary $s$ so long as (i) $\|\tilde v\|_{H^{s_0}}$ remains small for some fixed $s_0$ ($s_0=2$ in 
the argument of \cite{HJPR}) and (ii) the undifferentiated unmodulated variable 
decays at the rate $\|\tilde v\|_{L^2}(t)\leq C(1+t)^{-1/4}$ predicted by linear theory.
By Sobolev interpolation, taking $s$ high enough, one may estimate 
\be\label{dopt}
\|\tilde v\|_{H^{s_0}}(t) \leq C(1+t)^{-1/4+\eps}
\ee
for $\eps>0$ as small as desired, nearly recovering the decay rate of $\|v\|_{L^2}(t)$.

This argument is closed by ``mean value inequalities'' \cite[Lemma 4.9]{HJPR}
controlling $L^2$ norms of 
$ v-\tilde v= u(x+\gamma,t)-u(x,t) \sim u_x \gamma $
and $ \partial_x^{s_0}(v-\tilde v)= \sim \partial_x^{s_0+1}u \gamma $
by constant multiples of $\|\gamma\|_{L^2}$, together with integration by parts formulae
\cite[Lemma 4.8]{HJPR} effectively shifting derivatives in the Duhamel formulation for $v$ 
from $v$-factors to harmless $\gamma$-factors in order to minimize the required bounds on $v_x$.


\subsection{Forward-modulated damping} Here, we observe that the forward-modulated perturbation equation
(\eqref{fmod} below) like the unmodulated one, involves no loss in derivatives, hence admits a damping estimate
modulo higher-derivative terms in the modulation parameter $\gamma$.
The forward- and inverse-modulated perturbation variables can be seen to decay at the same rates (Section 
\ref{s:vs} below), for general choice of system. 
Thus, the substitution of forward-modulated damping estimates for the unmodulated estimate of \cite{HJPR}
would appear both to streamlines the argument a bit, and to apply in principle to a wider class of systems.
(Indeed, as noted below, it can be applied in all cases of delicate regularity \cite{HJPR,Z,JZN} treated so far.)

\medskip

In the remainder of the paper, we give technical details filling in the outline above for (LLE).

\section{Nonlinear perturbation system}\label{s:npert}

The Lugiato-Lefever equation (LLE) is
\be\label{1.1}
\partial_t \psi= -i\beta \partial_x^2 \psi - (1+i\alpha)\psi + i|\psi|^2\psi + F
\ee
with $\psi\in \CC$ and parameters $\alpha, \beta, F\in \R$ and $F>0$.
Perturbing about a steady periodic solution $\phi$, with $\psi=\phi+\tilde v$,
and expanding $\tilde v=\tilde v_r+ i\tilde v_i$ in real and imaginary parts gives, following \cite{HJPR},
\be\label{1.3}
\partial_t \bp \tilde v_r\\\tilde v_i\ep=
\cA[\phi]\partial_t \bp \tilde v_r\\\tilde v_i\ep + \cN(\tilde v),
\ee
where $\cN$ is quadratic order in $\tilde v$ and
\be\label{1.4}
\cA[\phi]=-\Id + \cJ \cL[\phi],
\ee
with
\be\label{JL}
\cJ=\bp 0 & -1\\1 & 0\ep, \qquad
\cL[\phi]=\bp -\beta \partial_x^2 -\alpha + 3\phi_r^2 + \phi_i^2 & 2\phi_r\phi_i\\
2\phi_r\phi_i &    -\beta \partial_x^2 -\alpha + \phi_r^2 + 3\phi_i^2 
\ep.
\ee

\section{Unmodulated damping estimate}\label{s:udamp}
Following \cite{HJPR}, define the (unmodulated) energy
$$
\tilde E_j (t) =
\|\partial_x^j \tilde v  \|_{L^2}^2 - \frac{1}{2\beta}
\langle \cJ M[\phi]\partial_x^{j-1}\tilde v, \partial_x^{j-1}\tilde v \rangle,
$$
\be\label{M}
M[\phi]:= 2 \bp -2\phi_r\phi_i &  \phi_r^2-\phi_i^2\\ \phi_r^2-\phi_i^2 & 2\phi_r \phi_i\\ \ep
\ee
yielding, after some computation \cite[Appendix A]{HJPR}
$$
\partial_t \tilde E_j (t) = -2\tilde E_j (t) + R_1(t) + R_2(t),
$$
where $R_1(t)$, comprising lower-order derivative bilinear terms, satisfies
$$
|R_1(t) |\leq C_1 \Big( \|\partial_x^{j-1}\tilde v\|_{L^2} + \|\tilde v\|_{L^2}\Big)(t),
$$
and $R_2$ is a nonlinear residual, satisfying
$$
|R_2(t)|\leq C_2 \|\partial_x^j\tilde v\|^2_{L^2}(t)
\Big( \|\partial_x^{j}\tilde v\|_{L^2} + \|\tilde v\|_{L^2} \Big)(t).
$$

Combining, and using Sobelev interpolation, one obtains for $\|\partial_x^j\tilde v\|_{L^2}$ sufficiently small
the unmodulated nonlinear damping estimate
\be\label{nldamp}
\partial_t \tilde E(t)\leq - \theta \tilde E(t) +C\|\tilde v\|_{L^2}^2(t), \qquad \theta>0,
\ee
yielding after integration
\be\label{NLgron}
\tilde E(t)\leq e^{-\theta t}\tilde E(0)+ C\int_0^t e^{-\theta(t-s)}\|\tilde v\|_{L^2}^2(s) ds,
\ee
so long as $\|\partial_x^j\tilde v\|_{L^2}(s)$ is sufficiently small on $0\leq s\leq t$, hence,
applying Sobolev interpolation once more, controlling
$\|\partial_x^j\tilde v(t)\|_{L^2}$ by  $\frac12\tilde E(t)+ C\|\tilde v\|_{L^2}(t)$, 
and thus ultimately by $\|\tilde v\|_{L^2}(t)$.

\br\label{drmk}
As noted in \cite[Rmk. 1.4]{HJPR}, this unmodulated damping estimate can substitute for the tame estimates
of \cite{HJPR} with little change in the argument, the advantage being the possibility 
(to be checked in individual cases) of extension to the quasilinear case.
\er

\section{Modulated damping estimates}\label{s:modpert}

\subsection{Inverse modulation}\label{s:inversemod}
We first recall the standard ``centered'' or ``inverse-modulated'' perturbation
\be\label{stdmod}
v(x,t):=\psi(x+ \gamma(x,t),t) -\phi(x)\approx \tilde v + (\phi_x + \tilde v_x) \gamma
\ee
serving as the primary perturbation variable in \cite{HJPR}, where the phase-modulation $\gamma$
is chosen in nonlocal fashion so as to to remove the principal time-asymptotic part of $\tilde v$,
thus minimizing $v$.

As the choice of $\gamma$ concerns long-time behavior, there is a great deal of flexibility in its
short-time behavior- in particular, {\it $\gamma$ may be chosen so that it and all derivatives are
bounded in short time and decaying at optimal linear rate in long time.}
See, e.g., \cite[\S 2.2]{JNRZ3}, for a general description of this strategy.
Writing
$$
\psi(x,t)=(\phi + v)(x-\gamma,t),
$$
computing derivatives
$$
\begin{aligned}
	\partial_t \psi(x,t)&=(\phi_t+v_t)(x-\gamma,t) - (\phi_x+v_x)(x-\gamma,t) \partial_t \gamma,\\
	\partial_x \psi(x,t)&=(\phi_x+v_x)(x-\gamma,t) - (\phi_x+v_x)(x-\gamma,t) \partial_x \gamma,\\
\end{aligned}
$$
etc., and substituting into \eqref{1.1}, yields, after a computation as in \eqref{1.3},
a $v$-equation consisting of the one for $\tilde v$ together with new terms 
$(\phi_x+v_x))\partial_x\gamma$ and $(\phi_x+v_x))\partial_t\gamma$ and their derivatives in $x$.

Terms involving only $\phi$ and $\gamma$ are harmless, as derivatives of $\phi$ are bounded and derivatives
of $\gamma$ are of the same order in $L^2$ as $v$ itself \cite{JZN,HJPR}.
However, a persistent issue in problems without parabolic smoothing is the appearance of 
terms involving products of highest-derivative $\phi$ and $v$ terms.
These can sometimes be treated by judicious rearrangement/construction of energy functional \cite{Z,JZN};
however, even when it succeeds, this can cost a great deal of additional effort.

In the present case (LLE), as pointed out in \cite{HJPR}, inverse modulation changes the perturbation equations
from semilinear to quasilinear form, seemingly preventing such a nonlinear damping estimate altogether;
see discussion, \cite[App. A]{HJPR}.
For this reason, the authors restrict to 
tame estimates on the slower-decaying but favorable regularity unmodulated
variable $\tilde v$, coupling this via an auxiliary argument as in \cite{RS} to their linearized bounds 
on the inverse-modulated perturbation $v$ to obtain ultimately, optimal nonlinear bounds on $\|v\|_{L^p}$.
As they point out, they could alternatively substitute a damping estimate on the unmodulated variable $\tilde v$.

\subsection{Forward modulation}\label{s:rmodpert}
A more natural modulated variable in many ways is the forward-modulated variable
\be\label{fmod}
\bar v:= \psi(x,t)- \phi(x-\gamma, t)\approx 
\tilde v + \phi_x(x-\gamma,t)  \gamma \approx v - \phi_{xx}\gamma^2.
\ee
Indeed, decay of $\bar v$, corresponding to description of behavior of $\psi$ as a modulation of $\phi$,
is the usual end goal for stability/behavior of periodic waves \cite{JNRZ3}.
However, the perturbation equations for $\bar v$ contains the shifted linear operator
$\bar {\cL}=\cA[\phi(\cdot-\gamma,\cdot)]$ in place of $\cL=\cA[\phi]$, giving, after ``centering'' to
recover the fixed linear operator $\cL$, error terms of order $\gamma$ times derivatives of $\phi$, which
are not sufficiently rapidly decaying to close a nonlinear perturbation argument.

For this reason, it is the inverse-modulated variable $v$ that is typically used in the stability analysis 
\cite{JNRZ3,DSSS,SSSU}, with bounds on $\bar v$ recovered after, by comparison with $v$, using the fact that
$v$ and $\bar v$ are related by the change of coordinates 
\be\label{choc}
x\to x-\gamma(x,t),
\ee
with $\gamma$ and all derivatives small in $L^p$, $1\leq p\leq \infty$.
This comparison is formalized in \cite[Lemma 2.7]{JNRZ3}, stating that $v$ controls $\bar v$ 
in all $L^p$ norms, provided that $\phi_x$, $\gamma$, and $\gamma_x$ are bounded in $L^\infty$
and $\gamma_x$ in $L^p$, with $\|\gamma_x\|_{L^\infty}<1$ (the latter guaranteeing invertibility of \eqref{choc}).

\subsection{Forward vs. inverse modulation bounds}\label{s:vs} 
We now make two small but useful observations.
First, we note that the argument for \cite[Lemma 2.7]{JNRZ3} gives not only $L^p$ control of $\bar v$ by $v$
but {\it equivalence of $L^p$ norms} modulo derivatives of $\gamma$, 
hence, by differentiation/induction, {\it equivalence of $H^s$ norms} modulo suitable derivatives of $\gamma$ as well.
Recall \cite[\S 2.3]{JNRZ3}, that derivatives of $\gamma$ are harmless in the derivation of nonlinear damping estimates
(see Section \ref{s:fwd} below).
We formalize this observation in the following pair of results.

\begin{lemma}[\cite{JNRZ3}]\label{switchlem}
Let $\gamma$ be bounded with $\|\gamma_x\|_{L^\infty(\RM)}<1$. Then, the change of coordinates
	$\Id-\gamma$ is invertible, with inverse $(\Id-\gamma)^{-1}=\Id + \tilde \gamma$, 
	satisfying for all $1\leq p\leq \infty$
\be\label{switcheq}
\begin{array}{rcl}
\|\psi-\phi\circ(\Id-\gamma)^{-1}\|_{L^p(\RM)}
&\le&(1+\|\gamma_x\|_{L^\infty(\RM)})^{\frac1p}\quad
\|\psi\circ(\Id-\gamma)-\phi\|_{L^p(\RM)}\\
\|\psi-\phi\circ(\Id+\gamma)\|_{L^p(\RM)}
&\le&(1+\|\gamma_x\|_{L^\infty(\RM)})^{\frac1p}\quad
\|\psi\circ(\Id-\gamma)-\phi\|_{L^p(\RM)}\\
&&+\|\phi_x\|_{L^\infty(\RM)}(1+\|\gamma_x\|_{L^\infty(\RM)})^{\frac1p}\|\gamma\|_{L^\infty(\RM)}\|\gamma_x\|_{L^p(\RM)}
\end{array}
\ee
and
\be\label{rswitcheq}
\begin{array}{rcl}
\|\psi-\phi\circ(\Id-\gamma)^{-1}\|_{L^p(\RM)}
&\ge&(1-\|\gamma_x\|_{L^\infty(\RM)})^{-\frac1p}\quad
\|\psi\circ(\Id-\gamma)-\phi\|_{L^p(\RM)}\\
\|\psi-\phi\circ(\Id+\gamma)\|_{L^p(\RM)}
&\ge&(1-\|\gamma_x\|_{L^\infty(\RM)})^{-\frac1p}\quad
\|\psi\circ(\Id-\gamma)-\phi\|_{L^p(\RM)}\\
&&-\|\phi_x\|_{L^\infty(\RM)}(1+\|\gamma_x\|_{L^\infty(\RM)})^{\frac1p}\|\gamma\|_{L^\infty(\RM)}\|\gamma_x\|_{L^p(\RM)}.
\end{array}
\ee
\end{lemma}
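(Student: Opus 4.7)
The plan is a direct change-of-variables argument together with a mean-value estimate for comparing the two candidate shifts $\tilde\gamma$ and $\gamma$. First, since $\|\gamma_x\|_{L^\infty(\RM)}<1$, the derivative $1-\gamma_x$ lies in $[1-\|\gamma_x\|_{L^\infty},\,1+\|\gamma_x\|_{L^\infty}]\subset(0,2)$, so $\Id-\gamma$ is a bi-Lipschitz homeomorphism of $\RM$ whose inverse I may write as $\Id+\tilde\gamma$ with $\tilde\gamma$ bounded. The identity $(\Id-\gamma)\circ(\Id+\tilde\gamma)=\Id$ reads $\tilde\gamma(x)=\gamma(x+\tilde\gamma(x))$, giving in particular $\|\tilde\gamma\|_{L^\infty}\le\|\gamma\|_{L^\infty}$.

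For the first lines of \eqref{switcheq} and \eqref{rswitcheq}, I would exploit the algebraic identity
\[
\psi-\phi\circ(\Id-\gamma)^{-1}=\bigl(\psi\circ(\Id-\gamma)-\phi\bigr)\circ(\Id-\gamma)^{-1},
\]
and change variables $y=x-\gamma(x)$, with Jacobian $\rmd y=(1-\gamma_x(x))\,\rmd x$, to obtain
\[
\|\psi-\phi\circ(\Id-\gamma)^{-1}\|_{L^p(\RM)}^p=\int_{\RM}|(\psi\circ(\Id-\gamma)-\phi)(x)|^p\,(1-\gamma_x(x))\,\rmd x
\]
(with the obvious modification when $p=\infty$). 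The two-sided bound $1-\|\gamma_x\|_{L^\infty}\le 1-\gamma_x\le 1+\|\gamma_x\|_{L^\infty}$ then yields the first lines of \eqref{switcheq} and \eqref{rswitcheq} simultaneously.

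For the second lines I insert $\phi\circ(\Id-\gamma)^{-1}=\phi\circ(\Id+\tilde\gamma)$ and apply the triangle (resp.\ reverse triangle) inequality, reducing matters to the bound $\|\phi\circ(\Id+\tilde\gamma)-\phi\circ(\Id+\gamma)\|_{L^p}\le\|\phi_x\|_{L^\infty}\|\tilde\gamma-\gamma\|_{L^p}$, which follows from a pointwise mean-value estimate. To control $\tilde\gamma-\gamma$ I use $\tilde\gamma(x)=\gamma(x+\tilde\gamma(x))$ together with the fundamental theorem of calculus, giving the pointwise convolution-type bound
\[
|\tilde\gamma(x)-\gamma(x)|=\Bigl|\int_x^{x+\tilde\gamma(x)}\gamma_x(z)\,\rmd z\Bigr|\le\bigl(|\gamma_x|*\mathbf{1}_{[-\|\tilde\gamma\|_{L^\infty},\|\tilde\gamma\|_{L^\infty}]}\bigr)(x).
\]
Young's convolution inequality, combined with $\|\tilde\gamma\|_{L^\infty}\le\|\gamma\|_{L^\infty}$, then delivers $\|\tilde\gamma-\gamma\|_{L^p}\lesssim\|\gamma\|_{L^\infty}\|\gamma_x\|_{L^p}$, producing the second summand of \eqref{switcheq}; the lower bound in the second line of \eqref{rswitcheq} follows by combining the same estimate with the reverse triangle inequality.

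The main obstacle is purely bookkeeping: matching the exact constants $(1\pm\|\gamma_x\|_{L^\infty})^{\pm1/p}$ stated in \eqref{switcheq}--\eqref{rswitcheq} requires careful tracking of the one-dimensional Jacobian factors and of the implicit identity $\tilde\gamma=\gamma\circ(\Id+\tilde\gamma)$. Conceptually, however, nothing deeper than the one-variable change-of-variables formula, the mean-value theorem, and Young's inequality is needed, and the argument is purely scalar; the vector-valued nature of $\psi$ and $\phi$ enters only through componentwise application.
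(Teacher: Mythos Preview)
Your proposal is correct and follows essentially the same route as the paper: change of variables for the first lines, then the splitting $\psi-\phi\circ(\Id+\gamma)=[\psi\circ(\Id-\gamma)-\phi]\circ(\Id+\tilde\gamma)+\phi\circ(\Id+\tilde\gamma)-\phi\circ(\Id+\gamma)$ with a mean-value bound for the second lines. The only minor difference is in estimating $\|\tilde\gamma-\gamma\|_{L^p}$: the paper writes $\tilde\gamma(x)-\gamma(x)=\tilde\gamma(x)\int_0^1\gamma_x(x+t\tilde\gamma(x))\,dt$ and changes variables in each $\|\gamma_x\circ(\Id+t\tilde\gamma)\|_{L^p}$, which produces exactly the stated constant $(1+\|\gamma_x\|_{L^\infty})^{1/p}$, whereas your convolution/Young argument gives a harmless factor $2$ instead; to match the printed constants you would need to switch to the parametrized form.
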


\begin{proof}
	We follow the argument of \cite[Lemma 2.7]{JNRZ3}.
By the implicit function theorem and boundedness of $\gamma$, the map $\Id-\gamma$ is invertible. Let us write its inverse $\Id+\tilde{\gamma}$. Since the Jacobian of $\Id+\tilde\gamma$ is bounded below by 
$(1+\|\gamma_x\|_{L^\infty(\RM)})^{-1}$, we have
	and above by $(1-\|\gamma_x\|_{L^\infty(\RM)})^{-1}$, we have
$$
\|[\psi\circ(\Id-\gamma)-\phi]\circ(\Id+\tilde\gamma)\|_{L^p(\RM)}\ \leq\ (1+\|\gamma_x\|_{L^\infty(\RM)})^{\frac1p}
\|\psi\circ(\Id-\gamma)-\phi\|_{L^p(\RM)},
$$
giving the first part of \eqref{switcheq}. The first part of \eqref{rswitcheq} follows similarly.
	Splitting $\psi-\phi\circ (\Id+\gamma)$ as
$$
\psi-\phi\circ(\Id+\gamma)\ =\ [\psi\circ(\Id-\gamma)-\phi]\circ(\Id+\tilde\gamma)\ +\ \phi\circ(\Id+\tilde\gamma)-\phi\circ(\Id+\gamma),
$$
and applying the intermediate value theorem then yields
$$
\|\phi\circ(\Id+\tilde\gamma)-\phi\circ(\Id+\gamma)\|_{L^p(\RM)}\ \leq\ \|\phi_x\|_{L^\infty(\RM)}
\|\tilde\gamma-\gamma\|_{L^p(\RM)}.
$$
But, from the identity $\tilde\gamma=\gamma_x\circ(\Id+\tilde\gamma)$ we have
$ \tilde{\gamma}(x)-\gamma(x)\ =\ \tilde\gamma(x)\ \int_0^1\ \gamma(x+t\tilde\gamma(x))\,dt $,
from which H\"older's inequality gives
$
\|\tilde{\gamma}-\gamma\|_{L^p(\RM)}^p\ \leq\ \|\tilde\gamma\|_{L^\infty(\RM)}^p\ \int_0^1\ \|\gamma_x\circ(\Id+t\tilde\gamma)\|_{L^p(\RM)}^p\,dt.
$
This gives the second part of \eqref{switcheq}, 
since $\|\tilde\gamma\|_{L^\infty(\RM)}\leq \|\gamma\|_{L^\infty(\RM)}$ and, for $t\in[0,1]$, $\Id+t\tilde\gamma$ 
is invertible with a Jacobian bounded below by $(1+\|\gamma_x\|_{L^\infty(\RM)})^{-1}$. 
The second part of \eqref{rswitcheq} follows similarly. 
\end{proof}

\br\label{asymrmk}
Note the asymmetry in bounds \eqref{switcheq}-\eqref{rswitcheq}, in that they involve derivatives of $\phi$ and $\gamma$ only, and not on $\psi$ (a potential problem, not a priori controlled) 
or $\tilde \gamma$ (harmless, controlled by $\gamma$).
\er

\bc\label{Hequiv}
Let $\gamma$ be bounded with $\|\gamma_x\|_{L^\infty(\RM)}<1$, and $\|\gamma_x\|_{H^{k+1}}\leq C_1$. 
Then, for some $C>0$.
\be\label{Hs_switcheq}
\begin{aligned}
	C^{-1}  \|\psi\circ(\Id-\gamma)-\phi\|_{H^k(\RM)}-  \|\gamma_x\|_{H^{k+1}} &\leq
	\|\psi -\phi \circ(\Id +\gamma)\|_{H^k(\RM)}\\
	&\leq C \big( \|\psi\circ(\Id-\gamma)-\phi\|_{H^k(\RM)} + \|\gamma_x\|_{H^{k+1}}\big).
\end{aligned}
\ee
\ec

\begin{proof}
	This follows readily by induction on the order of derivatives $k$, using the chain rule,
	Lemma \ref{switchlem}, and Sobolev embedding to control $L^\infty$ norms of derivatives of $\gamma$.
\end{proof}

\medskip

$\bullet$ Thus, {\it we are free to use $v$ or $\bar v$ alternatively, as is most convenient,} 
in $L^p \cap H^j$ estimates of any type of the (either inverse or forward) modulation remainder,
{\it in particular for nonlinear damping}.

\subsection{Forward damping estimate}\label{s:fwd}
Second, we note that (high-frequency) damping estimates, particularly in situations \cite{Z,JZN,HJPR}, 
do not proceed as in low-frequency estimates by separating out a centered, linearized part from a nonlinear residual,
but rather by energy estimates based on symmetric/antisymmetric structure of the equations.
Thus, there appears to be no inherent disadvantage, and in the present case considerable advantage as we shall see, in working with (uncentered) {\it forward-modulated equations} to obtain a nonlinear damping estimate.

In particular, in the case of the Lugiato-Lefever equations, the forward-modulated perturbation equations for
$\bar v$ become, writing $\psi(x,t)=\phi(x-\gamma,t)-v(x,t)$, computing derivatives
$$
\begin{aligned}
	\partial_t \psi(x,t)&=v_t(x,,t) - (\phi_t- \phi_x \partial_t \gamma)(x-\gamma,t),\\
	\partial_x \psi(x,t)&=v_t(x,,t) - (\phi_x(1+ \partial_x \gamma))(x-\gamma,t),\\
\end{aligned}
$$
etc., and substituting into \eqref{1.1}, after a computation like that of \eqref{1.3},
\be\label{f1.3}
\partial_t \bp \bar v_r\\\bar v_i\ep=
\cA[\phi(\cdot-\gamma, \cdot)]\partial_t \bp \bar v_r\\\bar v_i\ep + \bar \cN(\bar v) + 
\cR(\phi(\cdot -\gamma, \cdot), \gamma),
\ee
where $\bar\cN$ is quadratic order in $\bar v$, $\cA$ is as in \eqref{1.4}--\eqref{JL},
and $\cR$ involves products of derivatives of order $\leq 2$ of  $\phi$ agains
derivatives of order between $1$ and $2$ of $\gamma$.

Performing an energy estimate essentially identical to that in Section \ref{s:udamp}, we thus
find for the modulated energy
$$
\bar E_j (t) =
\|\partial_x^j \bar v  \|_{L^2}^2 - \frac{1}{2\beta}
\langle \cJ M[\phi(\cdot-\gamma, \cdot)]\partial_x^{j-1}\bar v, \partial_x^{j-1}\bar v \rangle,
$$
$M$ as in \eqref{M}, the estimate
$$
\partial_t \bar E_j (t) = -2\bar E_j (t) + R_1(t) + R_2(t) + R_3(t),
$$
where $R_1$, $R_2$ as before satisfy
$$
|R_1(t) |\leq  C_1\Big(\|\partial_x^{j-1}\bar v\|_{L^2} + \|\bar v\|_{L^2}\Big)(t), \qquad
|R_2(t)|\leq C_2 \|\partial_x^j\bar v\|^2_{L^2}(t)\Big( \|\partial_x^{j}\bar v\|_{L^2} + \|\bar v\|_{L^2} \Big)(t),
$$
and the new term $R_3$ satisfies 
$$
\begin{aligned}
	|R_3(t)| &\leq C_3\|\partial_{x,t}\gamma\|_{H^{j+2}}(t)
	\|\phi(\cdot-\gamma(\cdot, t), t)\|_{W^{j+3,\infty}} \|\partial_x^{j-1}\bar v\|_{L^2}(t)\\
	&\leq 
	\tilde {C}_3\Big( \|\partial_{x,t}\gamma\|_{H^{j+2}}^2 + \|\partial_x^{j-1}\bar v\|_{L^2}^2 \Big)(t) ,
\end{aligned}
$$
using integration by parts in the first line, and Young's inequality in the second.

Combining, and using Sobelev interpolation, one obtains for $\|\partial_x^j\bar v\|_{L^2}$,
$\|\partial_{x,t}\gamma\|_{H^{j+2}}$, sufficiently small
the forward-modulated nonlinear damping estimate
\be\label{fnldamp}
\partial_t \bar E(t)\leq - \theta \bar E +C(\|\bar v\|_{L^2}^2 + \|\partial_{x,t}\gamma\|_{H^{j+2}}^2)(t), 
\qquad \theta>0,
\ee
yielding after integration
\be\label{fNLgron}
\bar E(t)\leq e^{-\theta t}\bar E(0)+ C \int_0^t e^{-\theta(t-s)}(\|\bar v\|_{L^2}^2
+ \|\partial_{x,t}\gamma\|_{H^{j+2}}^2)(s) ds,
\ee
so long as $\|\partial_x^j\bar v\|_{L^2}$ and $\|\partial_{x,t}\gamma\|_{H^{j+2}}$ are
sufficiently small on $0\leq s\leq t$, hence controlling
$\|\partial_x^j\bar v\|(t)$ by exponential slaving to $\|\bar v\|_{L^2}(t)$ and $\|\partial_{x,t}\gamma\|_{H^{j+2}}$:
an exact analog of the standard inverse-modulation estimate derived in \cite[Prop. 2.5, \S2.3]{JNRZ3} 
in the parabolic case.
Summing over $o\leq j\leq k $ gives the following analog of \cite[Prop. 2.5, \S2.3]{JNRZ3}.

\bl\label{damplem}
For the forward-modulated perturbation variable $\bar v$ about periodic wave $\phi$ of (LLE),
\be\label{Hs_gron}
\|\bar v\|_{H^k}^2(t) \leq C e^{-\theta t}\|\bar v\|_{H^k}^2(0) + C \int_0^t e^{-\theta(t-s)}(\|\bar v\|_{L^2}^2
+ \|\partial_{x,t}\gamma\|_{H^{k+2}}^2)(s) ds,
\qquad \theta >0,
\ee
provided $\|\bar v\|_{H^k}(s)$ and $\|\partial_{x,t}\gamma\|_{H^{k+2}}^2(s)$ are sufficiently small
on $0\leq s\leq t$.
\el

\br\label{delicatermk}
One may check that a similar approach likewise yields a forward-modulated damping estimate in the
delicate cases treated previously in \cite{Z,JZN} by inverse-modulated damping.
\er

\section{Conclusion and applications}\label{s:conclusion}
Combining the observations of Sections \ref{s:vs} and \ref{s:fwd},
we see that a useful general strategy is to use inverse-modulated variables
to obtain linearized estimates, and forward-modulated variables for nonlinear damping estimates, the first
convenient for decay and the second for regularity.
In particular, as described in the introduction,
this allows the treatment of stability of periodic Lugiato-Lefever waves by the original techniques described
in \cite{JZ,JNRZ3} playing off linearized estimates with nonlinear damping, without the need for the 
additional techniques/estimates introduced in \cite{HJPR}.
Specifically, combining Corollary \ref{Hequiv} and Lemma \ref{damplem},
we have the following nonlinear damping estimate on the inverse-modulated perturbation variable $v$ for (LLE).

\bt\label{dampthm}
For the inverse-modulated perturbation variable $v$ about a periodic wave $\phi$ of (LLE),
\be\label{inv_Hs_gron}
\|v\|_{H^k}^2(t) \leq Ce^{-\theta t}\|v\|_{H^k}^2(0) + C \int_0^t e^{-\theta(t-s)}(\|v\|_{L^2}^2
+ \|\partial_{x,t}\gamma\|_{H^{k+2}}^2)(s) ds 
+ C\|\partial_{x}\gamma\|_{H^{k+1}}^2(t),
\ee
for $\theta >0$ provided $\|v\|_{H^k}(s)$ and $\|\partial_{x,t}\gamma\|_{H^{k+2}}^2(s)$ are sufficiently small
on $0\leq s\leq t$.
\et
\begin{proof}
From Corollary \ref{Hequiv} and Lemma \ref{damplem}, we have immediately \eqref{inv_Hs_gron}
for $\|\bar v\|_{H^k}(s)$ and $\|\partial_{x,t}\gamma\|_{H^{k+2}}^2(s)$ sufficiently small on $0\leq s\leq t$.
	Observing by a second application of Corollary \eqref{Hequiv} that $\|\bar v\|_{H^k}(s)$ is controlled by
$\| v\|_{H^k}(s)$ and $\|\partial_{x,t}\gamma\|_{H^{k+1}}(s)$, we are done.
\end{proof}

The damping estimate \eqref{inv_Hs_gron} differs from the standard one \eqref{Hs_gron} of \cite{JNRZ3} only
by the addition of the final term $C\|\partial_{x}\gamma\|_{H^{k+1}}^2(t)$, which is of the same order in
the usual nonlinear decay argument as the integral term
$C \int_0^t e^{-\theta(t-s)}\|\partial_{x,t}\gamma\|_{H^{k+2}}^2(s) ds$, hence harmless.
It follows that, indeed, the nonlinear iteration argument of \cite{HJPR} can be closed, alternatively, using
the more standard nonlinear damping estimate \eqref{inv_Hs_gron} in place of the coupled tame estimates/integration
by parts and mean value inequalities used there.

\subsection{Stability and asymptotic behavior}\label{s:appls}
The incorporation of Theorem \ref{dampthm} 
in the stability analysis of \cite{HJPR} 
for localized perturbations simplifies the arguments but does not much affect the results.
More important, having recovered the missing ingredient of nonlinear damping, we can 
apply the full machinery developed in \cite{JNRZ1,JNRZ2,JNRZ3} to obtain also new results
on stability and asymptotic behavior of periodic (LLE) waves with respect to 
{nonlocalized} perturbations.

\begin{theorem}[Stability]\label{oldmain}
Let $\phi$ be a smooth spatially periodic standing-wave solution of (LLE) that is diffusively spectrally
	stable in the sense of Schneider \cite{S1,S2}, and let $\psi$ be a perturbation such that
$
E_0:=\big\|\psi(\cdot-h_0(\cdot),0)-\phi(\cdot)\big\|_{L^1(\RM)\cap H^3(\RM)}
+\big\|\partial_x h_0\big\|_{L^1(\RM)\cap H^3(\RM)}
$
is sufficiently small, for some choice of phase modulation $h_0$
such that $h_0(-\infty)=-h_0(\infty)$.\footnote{
Achievable without loss of generality by a shift in $\bar u$ \cite{JNRZ3}.}
Then, $\psi$ exists for all $t>0$, and, for some phase function $\gamma(x,t)$ and $2\le p \le \infty$,
\ba\label{mainest}
\big\|\psi(\cdot-\gamma(\cdot,t), t)-\phi(\cdot)\big\|_{L^p(\RM)},
\quad \big\|\nabla_{x,t} \gamma(\cdot,t) \big\|_{L^{p}(\RM)}
&\lesssim E_0 (1+t)^{-\frac{1}{2}(1-1/p)},\\
\big\|\psi(\cdot-\gamma(\cdot,t), t)-\phi(\cdot)\big\|_{H^3(\RM)}
&\lesssim E_0 (1+t)^{-\frac{1}{4}},\\
\ea
and
\ba\label{andpsi}
\big\|\psi(\cdot , t)-\phi(\cdot)\big\|_{L^\infty(\RM)}, \quad
\big\| \gamma(\cdot,t) \big\|_{L^\infty(\RM)} &\lesssim E_0.
\ea
\end{theorem}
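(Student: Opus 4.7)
The plan is to follow the iterative framework of \cite{JNRZ1,JNRZ2,JNRZ3} for nonlocalized modulational stability, substituting the new damping estimate \eqref{inv_Hs_gron} for the parabolic-case damping used there. First I would introduce the nonlocal phase $\gamma(x,t)$ by spectral projection onto the translation-invariance mode of the linearization $\cL[\phi]$, extending the initial datum $h_0$, as in \cite[\S 2.2]{JNRZ3}. The inverse-modulated variable $v=\psi(\cdot+\gamma(\cdot,t),t)-\phi(\cdot)$ then satisfies a semilinear perturbation equation of the form $\partial_t v=\cL[\phi]v+\cN(v,\gamma,\partial_x\gamma,\partial_t\gamma)$ with quadratic-order nonlinearity. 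Applying Duhamel's formula together with the Bloch-decomposition linear semigroup estimates of \cite{HJP}---which under diffusive spectral stability give algebraic decay of the low-frequency part and exponential decay of the high-frequency part---yields pointwise-in-time bounds $\|v\|_{L^p},\ \|\partial_{x,t}\gamma\|_{L^p}\lesssim E_0(1+t)^{-\frac12(1-1/p)}$ provided the nonlinear source is of suitably controlled quadratic order.

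The nonlinear iteration is then set up as a continuous induction on a template norm
\[
\zeta(t):=\sup_{0\le s\le t}\Big[(1+s)^{1/4}\big(\|v\|_{L^2}+\|\partial_{x,t}\gamma\|_{L^2}\big)+(1+s)^{1/2}\big(\|v\|_{L^\infty}+\|\partial_{x,t}\gamma\|_{L^\infty}\big)+(1+s)^{1/4}\|v\|_{H^3}\Big],
\]
with the Duhamel representation for $(v,\gamma)$ shown to imply $\zeta(t)\le C\big(E_0+\zeta(t)^2\big)$, which for $E_0$ small bootstraps to $\zeta(t)\lesssim E_0$ on the maximal interval of existence, yielding in particular global existence. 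At each step Theorem \ref{dampthm} is invoked to convert the linear $L^2$ decay of $v$ into the $H^3$ control required by the nonlinearity; its right-hand side $\int_0^t e^{-\theta(t-s)}(\|v\|_{L^2}^2+\|\partial_{x,t}\gamma\|_{H^5}^2)\,ds+\|\partial_x\gamma\|_{H^4}^2(t)$ decays at rate $E_0^2(1+t)^{-1/2}$ under the template, compatible with the claimed $(1+t)^{-1/4}$ bound on $\|v\|_{H^3}$.

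The main obstacle, and the precise reason the sharper damping estimate is essential, is that for nonlocalized perturbations the phase $\gamma$ itself does not decay in $L^2$---it only remains bounded in $L^\infty$. Hence the traditional damping estimate, whose right-hand side features $\|\gamma\|_{L^2}^2$, would fail outright in this setting; our estimate \eqref{inv_Hs_gron}, featuring $\|\partial_x\gamma\|_{H^{k+1}}^2$ instead, is precisely what makes the iteration close. The $L^\infty$ bound on $\gamma$ itself is extracted via its nonlocal Duhamel representation, using the $L^\infty$-preserving property of the scalar heat-like phase equation together with the asymptotic normalization $h_0(-\infty)=-h_0(\infty)$, following the standard mechanism of \cite{JNRZ3}. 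The final $L^\infty$ bound on $\psi-\phi$ in \eqref{andpsi} then follows from Lemma \ref{switchlem}, which converts the template-supplied bounds on $v$ and $\gamma$ into the asserted bounds on $\psi(\cdot,t)-\phi(\cdot-\gamma(\cdot,t))$ and on $\psi(\cdot,t)-\phi(\cdot)$.
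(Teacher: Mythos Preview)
Your proposal is correct and follows essentially the same route as the paper: the paper's own proof simply says to combine the linearized estimates of \cite[\S 3]{JNRZ3} with the damping estimate \eqref{inv_Hs_gron} of Theorem \ref{dampthm} and then apply word for word the nonlinear iteration of \cite{JNRZ1,JNRZ2,JNRZ3}, which is exactly the scheme you have spelled out. One small point: for the $L^p$ bounds with $p>2$ you will want the more detailed linear semigroup estimates of \cite[\S 3]{JNRZ3} (in particular the $H^1\to L^p$ high-frequency bounds obtained via Pr\"uss' theorem and Sobolev embedding) rather than only those of \cite{HJP}, as noted in Remark \ref{linftyrmk}; and your remark that ``the traditional damping estimate'' features $\|\gamma\|_{L^2}^2$ conflates the tame-estimate approach of \cite{HJPR} with standard inverse-modulated damping---the actual obstruction for (LLE) is that inverse modulation renders the equation quasilinear so that no such damping estimate is available at all.
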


\begin{proof}
Combining the linearized estimates of \cite[\S 3]{JNRZ3} with estimate \eqref{inv_Hs_gron}
of Theorem \ref{dampthm}, and applying word for word the arguments of
\cite{JNRZ1,JNRZ2} (a special case of the more general \cite{JNRZ3}), we obtain the result;
see specifically the proof of Theorem 1.10 \cite{JNRZ3}.
\end{proof}

\br\label{linftyrmk}
We note that the more detailed linearized estimates of \cite[\S 3]{JNRZ3} 
follow from the same Bloch decomposition/spectral preparation as do those of
\cite{HJP,HJPR}; compare \cite[\S 2]{JNRZ3} and \cite[\S 2-3]{HJPR}.
A subtle difference in the two analyses is that the stronger nonlinear bounds of the
nonlinear damping approach, effectively controlling $\|v\|_{H^3}$ by $\|v\|_{L^2}$, allow the 
use of weaker linear bounds.
In particular one may obtain by Pr\"uss' Theorem exponential $H^1\to H^1$ bounds
instead of $L^2\to L^2$ bounds on the high-frequency part of the solution operator,
yielding by Sobelev embedding/interpolation exponential $H^1 \to L^p$ bounds for all $p\geq 2$.
Since $H^1$ is controlled by $L^2$ in the nonlinear iteration, this serves the same purpose
as would an $L^2\to L^\infty$ bound, allowing one to obtain $L^p$ bounds on $\bar v$ for higher norms
$p\geq 2$.
To obtain such bounds in the tame estimate framework of \cite{HJPR} would require obtaining $L^2\to L^\infty$
bounds, which may or may not be true.
\er

In Theorem \ref{oldmain},
$\psi(\cdot-\gamma(\cdot,t), t)-\phi(\cdot)$ corresponds to the modulated variable $v$ in the previous sections,
and $\psi(\cdot , t)-\phi(\cdot)$ to $\tilde v$, both decaying more slowly by factor $(1+t)^{1/2}$ than
their counterparts in the case of localized perturbations treated in \cite{HJPR}.
In particular, the phase $\gamma$ is bounded only in $L^\infty$, having infinite $L^p$ norm for any $p<\infty$.
These estimates are in fact sharp, as we now show.

Periodic standing waves occur in a one-parameter family. 
Taking the base wave $\phi$ without loss of generality 
to be period one, parametrize this family as $\phi^k(kx)$, where $k=1/X$ is the wave number, with $X$ equal to 
period.  Recall now the formal, Whitham approximation
\be\label{wref}
u(x,t)\approx
\phi^{\kappa(x,t)}(\Psi(x,t))
\ee
\cite{W,HK,Se,DSSS}
where the wave number $\kappa:=\Psi_x$ satisfies the Whitham equation 
\be\label{whitham}
\kappa_t-(\omega_0(\kappa))_x=(d(\kappa)\kappa_x)_x.
\ee
with $\omega(\kappa)\equiv 0$ the time frequency associated with
the family of periodic traveling waves- here, identically zero,
and $d$ is a diffusion term determined by formal asymptotic expansion.

Following \cite{DSSS,JNRZ2,SSSU}, define the quadratic order approximate equation
\be\label{mainwhitham}
 k_t = \ks d(\ks) k_{xx},
\ee
approximately governing a small perturbation $k=\ks \gamma_x$ of the type we seek, and define 
\be\label{h}
h(x):=\int_{-\infty}^x k(x).
\ee
Then, we have the following description of $L^p$-asymptotic behavior.

\begin{theorem}[Asymptotic behavior]\label{main}
Let $\eta>0$.
Under the assumptions of Proposition \ref{oldmain},
let $k$ satisfy
the quadratic approximant \eqref{mainwhitham} of
the second-order Whitham modulation equations \eqref{whitham} with initial data $k|_{t=0}=\ks\partial_x h_0$, 
let $h$ be as in \eqref{h}, and let $\gamma$ be the phase prescribed in the proof
	of Theorem \ref{oldmain} (see \cite{JNRZ1}). Then, for $t>0$ and $2\le p\le\infty$,
\ba\label{refinedest}
\|\psi(\cdot- \gamma(\cdot,t), t)-\phi^{\ks(1+ \gamma_x(\cdot,t))}(\,\cdot\,)\|_{L^p(\RM)}
&\lesssim E_0
\ln(2+t)\
 (1+t)^{-\frac{3}{4}},\\
\|\ks\partial_x \gamma(t)-k(t) \|_{L^p(\RM)}
&\lesssim
E_0 (1+t)^{-\frac{1}{2}(1-1/p)-\frac12+\eta}.\\
\|\gamma(t) -h(t) \|_{L^p(\RM)} &\lesssim E_0 (1+t)^{-\frac{1}{2}(1-1/p)+\eta}.\\
\ea
\end{theorem}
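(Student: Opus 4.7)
My plan is to apply the refined modulational asymptotics argument of \cite{JNRZ1,JNRZ2} (more generally \cite{JNRZ3}, with related work in \cite{DSSS,SSSU}) essentially verbatim, now that the crucial damping ingredient for (LLE) has been supplied by Theorem~\ref{dampthm}. The linearized starting point is a Bloch decomposition $S(t)=S_p(t)+\widetilde S(t)$ of the (LLE) solution operator, where $S_p(t)$ is the ``low-frequency Gaussian'' piece carrying the modulational dynamics (built from the zero Bloch eigenmode spanned by $\phi_x$ and the diffusion coefficient $d(\ks)$) and $\widetilde S(t)$ is a faster-decaying residual. The same Bloch/spectral preparation used in \cite{HJP,HJPR} underlies the sharper $L^p\to L^p$ bounds of \cite[\S3]{JNRZ3} that I will use here.

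The argument then proceeds in three coupled steps. First, I would substitute these linearized bounds, together with the damping estimate \eqref{inv_Hs_gron}, into the Duhamel representations of $v$ and $\partial_{x,t}\gamma$. Because \eqref{inv_Hs_gron} gains high-derivative $H^k$ control modulo only the absorbable $\|\partial_x\gamma\|_{H^{k+1}}^2$ term, the nonlinear forcing in the iteration is genuinely quadratic in $(v,\partial_{x,t}\gamma)$ and decays at an integrable improved rate---exactly the gain needed to match the rates stated in \eqref{refinedest}. Second, I would compare $\ks\partial_x\gamma$ with the solution of the linearized heat equation initialized at $\ks\partial_x h_0$, and then absorb the leading Burgers-type self-interaction into the quadratic Whitham approximant \eqref{mainwhitham} itself. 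The remaining discrepancy is forced by a quadratic source whose Duhamel integral decays at the stated rate, with the $\eta$ loss charged to the standard Bloch-remainder estimates. Integrating in $x$ and using the matched boundary condition $h_0(-\infty)=-h_0(\infty)$---which pins down the $L^\infty$-asymptotic limits of both $\gamma$ and $h$---then produces the third bound on $\gamma-h$.

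Finally, the first estimate follows by Taylor expanding $\phi^{\ks(1+\gamma_x)}(x)-\phi(x)=(\partial_\kappa\phi^\kappa)|_{\ks}(x)\cdot\ks\gamma_x+O(|\gamma_x|^2)$ and comparing with the Bloch-leading-order description of $v$, whose zero-Bloch-eigenmode contribution is precisely $(\partial_\kappa\phi^\kappa)|_{\ks}$ multiplied by the heat-semigroup contribution to $\ks\gamma_x$. All secondary contributions are quadratic in $(v,\partial_{x,t}\gamma)$ and land at the $(1+t)^{-3/4}$ rate; the logarithmic factor $\ln(2+t)$ reflects the usual resonant accumulation at the borderline decay rate in the quadratic Duhamel integral. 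The main obstacle will be the bookkeeping forced by the nonlocalized phase: $\gamma$ itself is only bounded in $L^\infty$ and has distinct limits at $\pm\infty$, so every appearance of undifferentiated $\gamma$ must be phrased via the comparison with $h$, and Corollary~\ref{Hequiv} must be invoked carefully each time one toggles between forward- and inverse-modulated variables in order not to reintroduce uncontrolled $\|\gamma\|_{L^p}$ errors. Once this bookkeeping is handled the remainder is essentially a transcription of \cite[proof of Theorem 1.10]{JNRZ3}.
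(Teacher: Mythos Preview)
Your proposal is correct and follows essentially the same route as the paper: combine the linearized Bloch estimates of \cite[\S3]{JNRZ3} with the damping estimate \eqref{inv_Hs_gron} of Theorem~\ref{dampthm}, then run the refined modulational argument of \cite{JNRZ1,JNRZ2,JNRZ3} verbatim. The only slip is your final citation: the relevant transcription is \cite[proof of Theorem~1.12]{JNRZ3} (asymptotic behavior), not Theorem~1.10 (stability), which is the reference for Theorem~\ref{oldmain}.
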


\begin{proof}
Again, this follows by combining the linearized estimates of \cite[\S 3]{JNRZ3} with estimate \eqref{inv_Hs_gron}
of Theorem \ref{dampthm}, and applying word for word the arguments of
\cite{JNRZ1,JNRZ2} (a special case of the more general \cite{JNRZ3}).
	See specifically the proof of Theorem 1.12 \cite{JNRZ3}.
\end{proof}

Note that $k$ and $h$ both satisfy a heat equation, with localized, and nonlocalized behavior.
When $k|_{t=0}$ has a first moment in $L^1$, its solution thus decays in all $L^p$ to a heat kernel,
while $h$  converges to an errorfunction. In particular, both $\gamma$ and $\tilde v=\psi -\phi$
have infinite $L^p$ norm for all $p<\infty$, in agreement with the estimates stated in Theorem \ref{oldmain}.
Thus, the tame estimate argument used in \cite{HJPR}, based on finiteness of $\|\gamma\|_{L^2}$ among
other things, does not suffice to treat this case.
Indeed, as described in Section \ref{s:comp}, the tame estimate approach of \cite{HJPR} gives up an
arbitrarily small amount of time-algebraic decay, so that even if carried out in $L^\infty$, where
$\gamma$ remains bounded, the estimates derived by this technique would still be unbounded and the argument not
close.

\end{document}